\documentclass[11pt]{amsart}

\usepackage[T1]{fontenc}
\usepackage{lmodern}
\usepackage{microtype}
\usepackage{amsmath,amssymb,amsthm}
\usepackage[colorlinks=true,linkcolor=blue,citecolor=blue,urlcolor=blue]{hyperref}

\theoremstyle{plain}
\newtheorem{theorem}{Theorem}[section]
\newtheorem*{conjecture}{Linearity Conjecture}
\newtheorem*{maintheorem}{Linearity Theorem}

\newtheorem{lemma}[theorem]{Lemma}

\newtheorem{claim}[theorem]{Claim}
\theoremstyle{definition}
\newtheorem{setup}[theorem]{Setup}
\theoremstyle{remark}
\newtheorem{remark}[theorem]{Remark}
\theoremstyle{plain}
\numberwithin{equation}{section}

\DeclareMathOperator{\Ann}{Ann}
\DeclareMathOperator{\Soc}{Soc}
\DeclareMathOperator{\Res}{Res}

\title[Totally invariant subvarieties]{Totally invariant subvarieties of $\mathbf P^n$}

\date{}

\author{Yujie Luo}
\address{School of Mathematical Science, University of Science and Technology of China, Hefei 230026, China
}

\email{yujieluo96@gmail.com}

\author{Sheng Meng}
\address{
    \textsc{School of Mathematical Sciences, Ministry of Education Key Laboratory of Mathematics and Engineering Applications \&
    Shanghai Key Laboratory of PMMP}\endgraf
    \textsc{East China Normal University, Shanghai 200241, China}\endgraf
}

\email{smeng@math.ecnu.edu.cn}
\subjclass[2020]{Primary 14E22; Secondary 14C17, 14F06, 13H10, 37F80}
\keywords{endomorphism of projective space, totally invariant subvariety, Grothendieck residue, canonical sheaf, Cayley--Bacharach}

\begin{document}

\begin{abstract}
Let $f:\mathbf P^n_{\mathbb C}\to\mathbf P^n_{\mathbb C}$ be an endomorphism of degree greater than one.
We prove that every irreducible Zariski closed subset $V$ with $f^{-1}(V)=V$ is a linear subspace.
\end{abstract}

\maketitle

\noindent\textbf{Disclosure.}
We claim no credit for this work.
We are sharing it solely to present a proof generated by GPT 5.6 Pro.
We have simplified and verified the proof.

See more intuitive versions \href{https://yujieluo96.github.io/AI4Math/totally-invariant-subvarieties.html}{online} ($\leftarrow$ click) in progress.

\section{Introduction}
\label{sec:introduction}

We work over $\mathbb C$.
Let $f:\mathbf P^n\to\mathbf P^n$ be a surjective endomorphism of algebraic degree $q>1$, i.e., $f^*\mathcal O_{\mathbf P^n}(1)\simeq\mathcal O_{\mathbf P^n}(q)$.
This is equivalent to $\deg f=q^n>1$.
An irreducible Zariski closed subset $V\subseteq \mathbf P^n$ is said to be {\it totally invariant} if $f^{-1}(V)=V$.
The folklore conjecture below asserts the linearity of totally invariant subvarieties.

\begin{conjecture}\phantomsection\label{conj:linearity}
Let $f:\mathbf P^n\to\mathbf P^n$ be an endomorphism of degree greater than one.
Then any totally invariant irreducible Zariski closed subset is linear.
\end{conjecture}

We briefly review the historical development of this conjecture.

In dimension two, Forn\ae ss--Sibony obtained early degree bounds
for totally invariant curves in $\mathbf P^2$ and reduced the
nonlinear case to that of a smooth conic
\cite[Proposition~4.2 and Section~4, p.~212]{FS94}.
Cerveau--Lins Neto ruled out this remaining possibility, thereby
establishing the conjecture for plane curves
\cite[Theorem~2 and Sections~3--4]{CLN00}.
Gurjar subsequently proved a ramification theorem that yields
the same conclusion as a corollary \cite{Gur03}, providing
another proof of the plane case; see \cite[Remark~5]{CX20}.
In arbitrary dimension, Cerveau--Lins Neto and Beauville proved
the conjecture for smooth hypersurfaces \cite{CLN00,Bea01}.
The main difficulties beyond these results concern singular
hypersurfaces and positive-dimensional subvarieties of higher
codimension, particularly singular ones.

The exceptional set exists \cite[Theorem~6.1]{DS08}.
Moreover, for a fixed endomorphism, there are only finitely many
irreducible proper totally invariant Zariski-closed subsets of
$\mathbf P^n$ \cite[Corollary~6.5]{DS08}.
In 2003, Dinh--Sibony gave the first correct proof of existence in the
more general setting of polynomial-like maps \cite[Section~3.4]{DS03}.
The projective-space result is recalled in their 2008 paper
\cite[Section~6]{DS08}.
In 2009, Dinh gave another proof using analytic multiplicative cocycles
\cite[Theorem~3.1]{Din09}.

In 2004, Briend--Cantat--Shishikura presented an incomplete proof of
the linearity conjecture in full generality \cite{BCS04}.
Their argument relied on Briend--Duval's local-degree lemma
\cite[Section~2]{BD01}.
Dinh pointed out the error in this lemma in November 2006, as recorded
in Briend--Duval's 2009 erratum \cite{BD09}; the error concerns the
counting of multiplicities \cite[p.~328]{DS08}.
The erratum proposes a replacement argument after passing to an iterate
\cite{BD09}, but does not prove the linearity conjecture.
The gap in Briend--Cantat--Shishikura's proof is explicitly noted by
Cantat--Xie \cite[Remark~6]{CX20}.

D.-Q.~Zhang studied singular totally invariant hypersurfaces
in $\mathbf P^3$ and reduced the nonlinear case to four
possible singular cubic surfaces \cite[Theorem~1.1]{Zha13}.
Combining a logarithmic-differential estimate with earlier
results, including Nakayama--Zhang's exclusion of the
degree-two case \cite[arXiv version, Section~5]{NZ09},
H\"oring subsequently proved that every totally invariant
prime divisor in $\mathbf P^3$ is a hyperplane
\cite[Corollary~1.2 and its proof]{Hor17}.
In arbitrary dimension, Mabed extended this method to prove
that every irreducible totally invariant hypersurface in
$\mathbf P^n$, $n\geq2$, with at most isolated singularities
is a hyperplane \cite[Corollary~1.4]{Mab23}.
Broustet--H\"oring's recent preprint extends the hypersurface
case to $\mathbf P^4$ and proves, in arbitrary dimension,
that every normal totally invariant hypersurface in
$\mathbf P^n$, $n\geq2$, is a hyperplane
\cite[Corollaries~1.5 and~1.3]{BH26}.

In higher codimension, Chen--Meng--Zhong obtained a linearity
result for smooth curves in $\mathbf P^3$ under an additional
equivariance assumption.
They proved that every smooth rationally connected projective
threefold of Picard number two admitting an int-amplified
endomorphism is toric \cite[Theorem~1.3]{CMZ25}.
In particular, this conclusion holds for such threefolds
admitting a polarized endomorphism.
As a corollary, every blowup-equivariant smooth curve in
$\mathbf P^3$ for a non-isomorphic surjective endomorphism
is a line \cite[Corollary~1.6]{CMZ25}.

In this paper, we prove the following theorem, thereby establishing \hyperref[conj:linearity]{Linearity Conjecture} in full.

\begin{maintheorem}\phantomsection\label{thm:main}
Let $f:\mathbf P^n\to\mathbf P^n$ be an endomorphism of degree greater than one.
Let $V\subseteq\mathbf P^n$ be an irreducible Zariski closed subset such that $f^{-1}(V)=V$.
Then $V$ is a linear subspace.
\end{maintheorem}
% Preserve the numbering of the existing remark and setup.
\setcounter{theorem}{2}

\begin{samepage}
\begin{remark}\label{rem:ground-field}
\hyperref[thm:main]{Linearity Theorem} also holds over any algebraically closed field of characteristic zero.
Indeed, descend $f$ and $V$ to a field finitely generated over $\mathbb Q$ and embed it into $\mathbb C$ \cite[Tag~01ZM]{Stacks}; both the hypotheses and linearity can be checked after field extension.
\end{remark}
\end{samepage}

We will use the following setup.

\begin{setup}
\label{setup:standing}
We use the following notation.
\begin{itemize}
\item $f=[f_0:\cdots:f_n]$, $g=f|_V$.
\item $r=\dim V$, $d=\deg V$, $q=(\deg f)^{1/n}$, $t=r(q-1)-1$.
\item $\{\ell_i,\widetilde\ell_i\}_{i=1}^r$, $y_1,\ldots,y_d$,
      $S=S_1\sqcup\cdots\sqcup S_d$, $(v_x)_{x\in S}$.
\end{itemize}
We construct them as follows.
In the setting of \hyperref[thm:main]{Linearity Theorem}, assume for a contradiction that
\[
 1\leq r<n,\qquad d\ge2.
\]

The $f_i$ are degree-$q$ homogeneous forms, with $q>1$ and $\deg g=q^r$.
Denote the smooth locus of $V$ by $V_{\mathrm{reg}}$.
Choose a dense open set $U\subseteq V_{\mathrm{reg}}$ such that $g^{-1}(U)\subseteq V_{\mathrm{reg}}$ and $g^{-1}(U)\to U$ is \'etale.

By Bertini's theorem, we can choose a general codimension-$r$ linear subspace $L=\{\ell_1=\cdots=\ell_r=0\}\subseteq\mathbf P^n$, where the $\ell_i$ are linear forms, meeting $V$ transversely in
\[
 V\cap L=\{y_1,\ldots,y_{d}\}\subseteq U.
\]
Put
\[
 \widetilde\ell_i=\ell_i(f_0,\ldots,f_n),\qquad
 S_j=g^{-1}(y_j).
\]
The common zero scheme of $\widetilde\ell_1,\ldots,\widetilde\ell_r$ on $V$ is $S$, with each $S_j$ consisting of $q^r$ distinct smooth points of $V$.
For each $x\in S$, fix a homogeneous coordinate representative $v_x\in\mathbb C^{n+1}\setminus\{0\}$ with $[v_x]=x$.
For a homogeneous polynomial $P$, write $P(x):=P(v_x)$ in the evaluation relations below.

The construction applies after replacing $f$ by any iterate; the geometric data are then chosen for that iterate, while $V$ remains fixed.
\end{setup}

\begin{proof}[Proof of {\hyperref[thm:main]{Linearity Theorem}}]
The cases $r=0$ and $r=n$ are immediate.
Otherwise, suppose for a contradiction that $d>1$ and work in Setup~\ref{setup:standing}.
Fix $m>0$ as in Lemma~\ref{prop:lifting}.
Replace $f$ by an iterate such that $q>md$, and choose the remaining data of Setup~\ref{setup:standing} for this iterate.
Lemma~\ref{prop:lifting} gives a homogeneous polynomial $Q$ of degree $m$ such that $Q|_V\not\equiv0$ and $Q$ vanishes on $S\setminus S_1$.

Let $Z$ be the zero divisor of $Q|_V$, an effective Cartier divisor since $V$ is integral.
Choose $r-1$ general pullbacks $E_i\in|\mathcal O_V(q)|$ of hyperplanes through $y_2$ (none if $r=1$).
Their base locus $S_2$ is finite, so they meet $Z$ properly, giving an effective zero-cycle.
Each of the $q^r$ distinct smooth points of $S_2$ contributes at least one: the local equations form a system of parameters.
The intersection formula (see \cite[Sections~2.3--2.4]{Ful}) gives
\[
 q^r\leq\deg(Z\cdot E_1\cdots E_{r-1})=q^{r-1}dm<q^r,
\]
a contradiction.
\end{proof}

\medskip
\noindent\textbf{Acknowledgements.}
We thank De-Qi Zhang, Tien-Cuong Dinh, Junyi Xie, Guolei Zhong, Zhuchao Ji,
Wentao Chang, Lei Zhang, and Jinxing Xu for their interest
in this work and for valuable discussions and suggestions
that improved the exposition.
Sheng Meng is supported by the Science and Technology Commission of Shanghai Municipality (No. 22DZ2229014), the National Natural Science Foundation of China, and the Shanghai Pilot Program for Basic Research.

\section{A relative Jacobian lemma}
\label{sec:jacobian}

For an Artinian local ring $(Q,\mathfrak m_Q)$, its socle is $\Soc(Q)=\Ann_Q(\mathfrak m_Q)$.
The Jacobian theorem of Scheja--Storch states that, over a field of characteristic zero, the socle of a zero-dimensional complete intersection
\[
 Q=K[[z_1,\ldots,z_c]]/(H_1,\ldots,H_c)
\]
is generated by the nonzero class of $\det(\partial H_a/\partial z_b)$; see \cite{SS75} and \cite[Theorems~7.1 and~A.5]{EU19}.
We need a relative version in which parameters on the reduced space may be lifted arbitrarily to the thickening.

The following lemma is used in the proof of Lemma~\ref{prop:relation} to show that the relative Jacobian has a nonzero socle class in each local fiber algebra.
The perfect residue pairing then gives the nonzero coefficients in the relation on $S_1$.

\begin{lemma}\label{lem:jacobian}
Let $B=\mathbb C[[s_1,\ldots,s_r]]$, $R=B[[z_1,\ldots,z_c]]$, and
\[
 A=R/(H_1,\ldots,H_c),
 \qquad
 \Delta=\det\left(\frac{\partial H_a}{\partial z_b}\right)\in A,
\]
where $r,c\geq1$ and the $H_a$ form a regular sequence.
Assume that $A$ is finite flat over $B$ and that $A_{\mathrm{red}}\simeq B$ as a $B$-algebra.
If $N$ is the nilradical of $A$, then
\[
 \Ann_A(N)=B\Delta.
\]
Moreover, if $\tau_1,\ldots,\tau_r\in A$ reduce to a regular system of parameters of $A/N$, then the image of $\Delta$ generates the socle of $A/(\tau_1,\ldots,\tau_r)$ and is nonzero.
\end{lemma}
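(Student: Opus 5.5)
Our approach is to deduce both assertions from the absolute Scheja--Storch theorem, applied to the fibre algebras over suitable artinian quotients of $B$, combined with Grothendieck duality for the finite flat map $B\to A$.

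\textbf{Step 1: $A$ is Gorenstein and relatively dualizing.} Since the $H_a$ form a regular sequence in the regular ring $R$, the ring $A$ is a local complete intersection. Since $A$ is finite flat over $B$, we have $\dim A=r$, so $A$ is Cohen--Macaulay and Gorenstein. Hence $\omega_{A/B}=\mathrm{Hom}_B(A,B)$ is an invertible $A$-module, say generated by a trace form $\eta$ (a relative Grothendieck residue). Concretely, for $\phi\in\mathrm{Hom}_B(A,B)$ the pairing $A\times A\to B$, $(a,b)\mapsto\eta(ab)$, is perfect. Since $A_{\mathrm{red}}\simeq B$, the nilradical $N$ is the kernel of a $B$-algebra retraction $A\to B$; call this map $\pi$. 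Then $A=B\oplus N$ as $B$-modules, and $N$ is $B$-free, being a direct summand of a free module.

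\textbf{Step 2: $\Ann_A(N)=B\Delta$.} The module $\Ann_A(N)$ is identified via duality with $\mathrm{Hom}_A(A/N,A)\simeq\mathrm{Hom}_B(A/N,B)\otimes(\text{unit})$, because $\mathrm{Hom}_A(M,\omega_{A/B})=\mathrm{Hom}_B(M,B)$. Taking $M=A/N=B$, we get that $\Ann_A(N)$ is free of rank one over $B$, generated by the element $e$ corresponding to $\pi$, i.e. $\eta(ea)=\pi(a)$. It remains to show that $\Delta$ annihilates $N$ and is a generator. For this, reduce modulo $\mathfrak m_B=(s)$. The fibre $A_0=A/\mathfrak m_BA=\mathbb C[[z]]/(\bar H)$ is artinian local, and by Scheja--Storch its socle is $\mathbb C\bar\Delta$ with $\bar\Delta\neq0$. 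The relative duality theory for finite flat complete intersections (the relative residue symbol, Kunz--Lipman) gives the identity $\mathrm{Res}(\Delta\cdot a\,/H)=\mathrm{tr}_{A/B}(a)$, where the trace is taken with respect to the fundamental class. This can be checked by flatness and base change to the generic point, where $A$ is étale-locally a product of copies of the base. However, the generic fibre of $A$ over $B$ is not reduced; it is local artinian over $\mathrm{Frac}(B)$. Hence I would instead use the standard fact that $\Delta$ generates the image of the trace/Noether different, $\mathrm{tr}\in\omega_{A/B}$ corresponds to $\Delta$. The trace $\mathrm{tr}_{A/B}$ kills $N$ (nilpotents have zero trace over a domain in characteristic zero), and $\mathrm{tr}|_B=\mathrm{rk}\cdot\mathrm{id}$. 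So $\mathrm{tr}=\mathrm{rk}\cdot\pi$, and since $\mathrm{rk}\neq0$ in characteristic zero, $\Delta$ is a unit multiple of $e$, giving $\Ann_A(N)=B\Delta$. The main obstacle is justifying the relative Scheja--Storch identity "$\Delta\leftrightarrow\mathrm{tr}$" in this relative setting. I would try to cite it from \cite{EU19} (Appendix A) or prove it by deforming to the case where $H$ is generically étale. A simpler route is to compare along the closed fibre using base change of $\omega_{A/B}$: the image of $\Delta$ in $A_0$ generates $\Soc(A_0)$, which is the image of $e$ since $e$ maps to a generator of $\mathrm{Hom}(\mathbb C,A_0)$.

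\textbf{Step 3: the socle statement.} Let $\tau_i$ reduce to a regular system of parameters of $A/N\simeq B$. Then $B\to A$ composed with the choice gives a new set of parameters $s'_i$ on $B$. Rewriting $A$ over $B'=\mathbb C[[\tau]]$, which is legitimate because $\mathbb C[[s,z]]=\mathbb C[[\tau,z]]$ after a coordinate change lifting the $\tau_i$ to $R$, the hypotheses persist and $\Delta$ changes only by a unit, namely the Jacobian of the change of variables restricted to $z$-derivatives. Indeed $\partial/\partial z_b$ becomes $\partial/\partial z_b$ plus terms involving $\partial\tau/\partial z$; these I would absorb by a full $(r+c)$ Jacobian comparison modulo the $H$ and $\tau$. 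Then $A/(\tau)$ is the closed fibre over $B'$, and Scheja--Storch applies directly, giving that $\bar\Delta$ generates the socle and is nonzero. Alternatively, by Step 2, $\Delta\cdot A=\Delta B$ has image in $A/(\tau)$ equal to $\mathbb C\bar\Delta$, which is killed by $N$ and by the $\tau$, hence by the maximal ideal. Duality (the pairing being perfect after base change, since $A/(\tau)$ is Gorenstein) identifies the socle with the image of $\Ann(N)\otimes B/\mathfrak m$, which is one-dimensional and spanned by the image of $\Delta$, and this image is nonzero.
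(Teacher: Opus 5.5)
Your duality bookkeeping is correct and matches the paper. $\Ann_A(N)=\operatorname{Hom}_A(A/N,A)\simeq\operatorname{Hom}_B(A/N,B)\simeq B$ is free of rank one. Once $\Delta\in\Ann_A(N)$ is known, the absolute Jacobian theorem on the closed fibre ($\Delta\notin\mathfrak m_BA$) and Nakayama give $\Ann_A(N)=B\Delta$.

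\textbf{The gap.} You never prove $N\Delta=0$. Your main route rests on a relative trace formula $\operatorname{tr}_{A/B}=\Delta\cdot\eta$ for power-series complete intersections finite over $B$, which you yourself flag as unjustified. With a precise reference this would be a nice alternative route ($\operatorname{tr}_{A/B}=\operatorname{rk}\cdot\pi$ forces $\Delta=\operatorname{rk}\cdot e$). As written, though, it is an unproved input carrying essentially the whole content of the first assertion.

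Your fallback cannot replace it. Knowing that $\bar\Delta$ and $\bar e$ both span $\Soc(A/\mathfrak m_BA)$ only gives $\Delta\equiv\alpha e\pmod{\mathfrak m_BA}$, and a congruence modulo $\mathfrak m_BA$ is blind to whether $\Delta$ kills $N$. For example, if $n\in N$ with $nN\neq0$, then $\Delta+s_1n$ has the same closed-fibre image, yet $s_1nN\neq0$ because $s_1$ is a nonzerodivisor on the flat $B$-module $A$. Closed-fibre information alone determines $\Delta$ only modulo $\mathfrak m_BA$ and cannot yield $N\Delta=0$.

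\textbf{The missing idea} is the generic-point reduction you discarded; the generic fibre does not need to be \'etale for it to work. Translate the $z_b$ so that $\ker(R\to A/N)=(z_1,\ldots,z_c)=:\mathfrak p$, and let $K=\operatorname{Frac}(B)$. Then $A_K=A\otimes_BK$ equals $R_{\mathfrak p}/(H_1,\ldots,H_c)$. Completing presents it as a zero-dimensional complete intersection $K[[z]]/(H)$ with residue field $K$. The absolute Scheja--Storch theorem over the characteristic-zero field $K$ puts $\Delta$ in $\Soc(A_K)$, i.e.\ $N_K\Delta=0$. Flatness gives $A\hookrightarrow A_K$, hence $N\Delta=0$. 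This is exactly how the paper proceeds.

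\textbf{Step 3.} It inherits the gap, since it uses $\Delta A=B\Delta$. Apart from that, your second route is the paper's argument, but two checks are missing, and they are what make $\Ann_A(N)$ a $B'$-direct summand of $A$, so that $\bar\Delta\neq0$ survives reduction:
\begin{itemize}
\item $A$ is finite free over $B'=\mathbb C[[\tau]]$: the $\tau_i$ form a system of parameters of the Cohen--Macaulay ring $A$, and miracle flatness applies.
\item $A/(\tau)$ is a complete intersection: lifts of the $\tau_i$ together with the $H_a$ form a regular sequence in $R$.
\end{itemize}
Your first route claims that passing to coordinates $(\tau,z)$ changes $\Delta$ only by a unit. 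This is not delivered by the sketched Jacobian comparison: the off-diagonal blocks $\partial\tau/\partial z$ and $\partial H/\partial s$ contribute correction terms you give no reason to discard. Moreover, $(\tilde\tau,z)$ is a coordinate system only after the translation above. The claim is true, but only because both Jacobians generate the rank-one $A/N$-module $\Ann_A(N)$. That is, it follows from Step 2 applied over $B'$ as well, which again needs flatness over $B'$.
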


\begin{proof}
Translate each $z_b$ by its image in $A/N=B$.
Then the kernel of $R\to A/N$ is $(z_1,\ldots,z_c)$, and differentiation in the $z_b$ with the $s_i$ fixed is unchanged.
Let $K$ be the fraction field of $B$ and put $\mathfrak p=(z_1,\ldots,z_c)\subset R$.
Since $\sqrt{(H_1,\ldots,H_c)}=\mathfrak p$, the finite local $K$-algebra $A_K=A\otimes_BK$ identifies with $R_{\mathfrak p}/(H_1,\ldots,H_c)$.
Completing $R_{\mathfrak p}$ gives $K[[z_1,\ldots,z_c]]$ and leaves its Artinian quotient $A_K$ unchanged.
This presents $A_K$ as a zero-dimensional complete intersection with residue field $K$.
The Jacobian theorem therefore gives $N_K\Delta=0$.
Since $A$ is $B$-flat, $A\to A_K$ is injective, and hence $N\Delta=0$ in $A$.

Put $M=\Ann_A(N)$.
The closed fiber $A/\mathfrak m_BA$ is an Artinian complete intersection and hence Gorenstein.
Lift a linear functional defining its perfect multiplication pairing to a $B$-linear map $\ell:A\to B$.
The $A$-linear map
\[
 A\longrightarrow\operatorname{Hom}_B(A,B),
 \qquad a\longmapsto(b\longmapsto\ell(ab)),
\]
is an isomorphism modulo $\mathfrak m_B$ and hence an isomorphism between free $B$-modules of the same rank.
Finite adjunction consequently gives an isomorphism of $B$-modules
\[
 M=\operatorname{Hom}_A(B,A)
 \simeq\operatorname{Hom}_A(B,\operatorname{Hom}_B(A,B))
 \simeq B.
\]
The Jacobian theorem on the closed fiber gives $\Delta\notin\mathfrak m_BA$, and therefore $\Delta\notin\mathfrak m_BM$.
As $M$ is free of rank one over $B$, it follows that $M=B\Delta$.

Now let $B'=\mathbb C[[T_1,\ldots,T_r]]$ act on $A$ by $T_i\mapsto\tau_i$.
The induced map $B'\to A/N$ is an isomorphism.
Each quotient in the finite filtration by powers of $N$ is finite over $A/N$, so $A$ is finite over $B'$.
The $\tau_i$ form a system of parameters in the Cohen--Macaulay ring $A$, and hence a regular sequence.
Since $B'$ is regular of dimension $r$ and the closed fiber has dimension zero, miracle flatness makes $A$ finite free over $B'$ \cite[Tags~00R4 and~00NX]{Stacks}.
Choose lifts of the $\tau_i$ to $R$.
Together with $H_1,\ldots,H_c$, they form a system of parameters of the regular local ring $R$, hence a regular sequence \cite[Tag~02JN]{Stacks}.
Thus the closed fiber $A/(\tau_1,\ldots,\tau_r)$ is again a zero-dimensional complete intersection, so $\operatorname{Hom}_{B'}(A,B')$ is free of rank one over $A$ by the same argument.

The quotient $A\to A/N=B'$ splits as a map of $B'$-modules.
Dualizing gives a split injection $B'\hookrightarrow\mathrm{Hom}_{B'}(A,B')$ whose image consists of the functionals vanishing on $N$.
Under an $A$-linear identification $\mathrm{Hom}_{B'}(A,B')\simeq A$, this image is precisely $\Ann_A(N)=M$.
Thus $M\hookrightarrow A$ splits over $B'$.
The actions of $B$ and $B'$ on $M$ both factor through $A/N$, so $M=B'\Delta$.
Reduction modulo $(T_1,\ldots,T_r)$ preserves the split injection.
Hence $\Delta$ remains nonzero in $A/(\tau_1,\ldots,\tau_r)$.
The maximal ideal of this quotient is the image of $N$ and annihilates $\Delta$.
Its socle is one-dimensional, proving the assertion.
\end{proof}

\section{Relations on a fiber}
\label{sec:residues}

Let $p_1,\ldots,p_n$ be polynomials in affine coordinates $\xi_1,\ldots,\xi_n$, with an isolated common zero $x$.
Write the local Grothendieck residue as
\[
 \rho_x(P)=\Res_x\left[
 \frac{P\,d\xi_1\wedge\cdots\wedge d\xi_n}{p_1\cdots p_n}
 \right].
\]
The pairing $(a,b)\mapsto\rho_x(ab)$ is perfect on the local complete-intersection algebra \cite[Theorem~1.5.1]{CD05}.
If the highest homogeneous parts of the $p_i$ have no common nonzero zero, Euler--Jacobi vanishing gives
\begin{equation}\label{eq:euler-jacobi}
 \sum_x\rho_x(P)=0
 \quad\text{for}\quad
 \deg P<\sum_{i=1}^n(\deg p_i-1).
\end{equation}
The sum includes all common zeros, and the fiber algebras need not be reduced \cite[Theorem~1.5.12]{CD05}.

The following lemma supplies the relation on $S_1$ used in Lemma~\ref{prop:lifting}.
That lemma turns the relation into a homogeneous polynomial that is nonzero on $V$ and vanishes on the other fibers.

\begin{lemma}\label{prop:relation}
In Setup~\ref{setup:standing}, there are nonzero scalars $\lambda_x$, $x\in S_1$, such that, for every homogeneous polynomial $P$ of degree $t$,
\[
 \sum_{x\in S_1}\lambda_xP(x)=0.
\]
\end{lemma}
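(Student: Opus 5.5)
The plan is to obtain the relation from Euler--Jacobi vanishing \eqref{eq:euler-jacobi}, applied to the whole fiber $f^{-1}(y_1)$ in $\mathbf P^n$. Total invariance gives $f^{-1}(y_1)\subseteq f^{-1}(V)=V$, so set-theoretically $f^{-1}(y_1)=S_1$. The scheme $f^{-1}(y_1)$ is nonreduced, and its length is $q^{n-r}$ at each point of $S_1$. The idea is to multiply $P$ by a Jacobian factor $\Delta$ of degree $(n-r)(q-1)$ that kills the nilpotent directions. Then the residue of $P\Delta$ at each $x\in S_1$ is $P(x)$ times a nonzero constant.

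\emph{Setup of the polynomial system.} Complete $\ell_1,\dots,\ell_r$ by general linear forms $m_{r+1},\dots,m_n$ vanishing at $y_1$, so that $\{\ell_i=m_a=0\}=\{y_1\}$ scheme-theoretically. Put $p_i=\ell_i(f)$ for $i\le r$ and $p_a=m_a(f)$ for $a>r$, all homogeneous of degree $q$. Since $f$ is a morphism, these $n$ forms have only the common zeros $f^{-1}(y_1)$. Choose a general affine chart $\mathbb C^n\subseteq\mathbf P^n$ with coordinates $\xi_1,\dots,\xi_n$ that contains $S_1$. Choose it so that the highest homogeneous parts of the dehomogenized $p_i$ have no common nonzero zero; this is the statement that $f^{-1}(y_1)$ misses the hyperplane at infinity. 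Let
\[
 \Delta=\det\left(\frac{\partial p_a}{\partial\xi_b}\right)_{r<a,b\le n},
\]
a polynomial of degree at most $(n-r)(q-1)$. For $P$ of degree $t=r(q-1)-1$ we have $\deg(P\Delta)\le n(q-1)-1<n(q-1)$. Hence \eqref{eq:euler-jacobi} gives $\sum_{x\in S_1}\rho_x(P\Delta)=0$, after dehomogenizing $P$ and rescaling the $v_x$ accordingly.

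\emph{Local analysis at each $x\in S_1$.} I want to show that $\Delta$ spans the socle of the local fiber algebra $\mathcal O_x/(p_1,\dots,p_n)$. Then $(P-P(x))\Delta=0$ there, so $\rho_x(P\Delta)=P(x)\rho_x(\Delta)$, and $\lambda_x:=\rho_x(\Delta)\neq0$ by perfectness of the residue pairing. This is where Lemma~\ref{lem:jacobian} enters. Since $y_1\in U\subseteq V_{\mathrm{reg}}$, choose local analytic equations $h_{r+1},\dots,h_n$ of $V$ at $y_1$, and take the $m_a$ general enough that $(\ell,h)$ and $(\ell,m)$ both generate $\mathfrak m_{y_1}$. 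Set $B=\widehat{\mathcal O}_{V,y_1}\simeq\mathbb C[[s_1,\dots,s_r]]$, and use general coordinates at $x$ so that $\xi_1,\dots,\xi_r$ restrict to parameters of $V$ at $x$ and $\xi_{r+1},\dots,\xi_n$ play the role of $z$. Let $A=\widehat{\mathcal O}_{\mathbf P^n,x}/(h_a\circ f)$. Its reduction is $\widehat{\mathcal O}_{V,x}$, because $f^{-1}(V)=V$ near $x$. Since $g$ is étale at $x$, this reduction is $\simeq B$. Moreover $A$ is a complete intersection of dimension $r$, hence Cohen--Macaulay and finite flat over $B$. The lemma, applied with $\tau_i=\ell_i(f)$, then says that the relative Jacobian $\det(\partial(h_a\circ f)/\partial\xi_b)$ generates the socle of $A/(\tau)$. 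Finally, $A/(\tau)$ is the local algebra of $f^{-1}(y_1)$ at $x$.

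\emph{Main obstacle.} The remaining step is to compare the lemma's analytic Jacobian with the polynomial $\Delta$ built from the $m_a$. Modulo $(\ell_1,\dots,\ell_r)$ at $y_1$, each $m_a$ is an invertible linear combination of the $h_a$, up to terms in $\mathfrak m_{y_1}^2$ that vanish on $L\cap V$ near $y_1$. Pulling back, $p_a\equiv\sum_{a'}U_{aa'}(f)\,(h_{a'}\circ f)$ modulo $(\tau)$ and modulo the ideal of $A$, with $U$ invertible. Differentiating in the $z$-directions and reducing, the Jacobians should agree up to the unit $\det U(f)$ plus error terms in $(h\circ f,\tau)$; getting these error terms exactly right is the point I expect to need care. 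Multiplication by a unit preserves the property of spanning the socle, so $\Delta$ also spans the socle. This gives $\rho_x(\Delta)\ne0$ and the relation $\sum_{x\in S_1}\lambda_xP(x)=0$. If the direct comparison is messy, I would first try choosing the $h_a$ to be the $m_a$ corrected only by higher-order terms, so that the unit factor is $1$ at $x$.
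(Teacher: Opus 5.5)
Your overall strategy is the paper's: Euler--Jacobi on the whole fiber $f^{-1}(y_1)=S_1$, a Jacobian factor of degree at most $(n-r)(q-1)$, and its socle property from Lemma~\ref{lem:jacobian}. However, the comparison step you flag fails for your choice of the $m_a$, and care with error terms will not fix it.

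\textbf{Where it fails.} Near $y_1$ write $m_a=\sum_{a'}U_{aa'}h_{a'}+\sum_i C_{ai}\ell_i$. For general linear forms vanishing at $y_1$ one has $C_{ai}(y_1)\neq0$. A congruence modulo $(\tau)$ is not preserved by $\partial/\partial\xi_b$. After pulling back, $\partial p_a/\partial\xi_b$ contains $C_{ai}(f)\,\partial\tau_i/\partial\xi_b$, which need not lie in $(\tau,h\circ f)$. It is typically a unit at $x$, because the fixed directions $\partial/\partial\xi_b$ need not lie in $\ker d(\ell\circ f)_x$.

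A local model shows this. Take $f(s,z)=(s+z,z^2)$, $V=\{z=0\}$, $\ell=u_1$, $h=u_2$, $m=u_2+cu_1$. The fiber algebra $\mathbb C[[s,z]]/(s+z,z^2)\simeq\mathbb C[z]/(z^2)$ has socle $\mathbb C z$. But $\partial(m\circ f)/\partial z=2z+c$ is a unit for $c\neq0$. The residue with respect to $p_1=s+z$, $p_2=m\circ f$ gives $\rho_x\bigl(P\,(2z+c)\bigr)=2P(x)+c\,(\partial_zP-\partial_sP)(x)$. This involves a derivative of $P$, so no relation among the values $P(x)$ follows.

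So the real obstruction is the $\ell$-component of the $m_a$, not the unit $\det U(f)$, which is harmless. Your fallback of taking $h_a$ to be $m_a$ plus higher-order terms is impossible for general $m_a$. Equations of $V$ have differentials vanishing on $T_{y_1}V$, and a general $m_a$ does not.

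\textbf{The fix.} Choose $m_{r+1},\dots,m_n$ vanishing on the embedded tangent space $\mathbb T_{y_1}V$, so that their differentials span the conormal space of $V$ at $y_1$. By transversality of $L$, the $\ell_i$ and $m_a$ still cut out $y_1$ scheme-theoretically. Then $V$ is locally a graph $m=\psi(\ell)$ with $\psi_a\in(\ell)^2$. Set $h_a=m_a-\psi_a(\ell)$, so $m_a-h_a\in\mathfrak m_{y_1}^2$.

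The pullback of $m_a-h_a$ then lies in the square of the fiber ideal $(\tau,h\circ f)$. Its $\xi_b$-derivatives therefore vanish in the fiber algebra. Hence $\Delta$ equals a unit (from dehomogenizing) times $\det\bigl(\partial(h_a\circ f)/\partial\xi_b\bigr)$ there, and the rest of your argument goes through. This is exactly the paper's normalization: $du_{r+1},\dots,du_n$ span the conormal space and $h_a=u_{r+a}+O(u^2)$, so that $H_a-v_{r+a}\in(v)^2$.

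Separately, in applying Lemma~\ref{lem:jacobian} the base should be $B=\mathbb C[[\xi_1,\dots,\xi_r]]$ at $x$, not $\widehat{\mathcal O}_{V,y_1}$. Étaleness of $g$ is what makes the $\tau_i$ reduce to parameters of $A/N$.
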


\begin{proof}
Put $y=y_1$ and $c=n-r$.
Choose target coordinates $[Y_0:\cdots:Y_n]$ with $y=[1:0:\cdots:0]$, and write $u_i=Y_i/Y_0$.
We may assume that $u_1,\ldots,u_r$ restrict to parameters on $V$ at $y$, while $du_{r+1},\ldots,du_n$ span its conormal space there.
The completed ideal of $V$ at $y$ then has generators
\[
 h_a=u_{r+a}+O((u_1,\ldots,u_n)^2),\qquad 1\leq a\leq c.
\]
In particular, $u_1,\ldots,u_r,h_1,\ldots,h_c$ are ambient local parameters.

Write $f=[f_0:\cdots:f_n]$, where the $f_i$ are degree-$q$ forms without a common zero.
Choose a source hyperplane avoiding $f^{-1}(y)$ and not contained in any $\{f_i=0\}$ for $1\leq i\leq n$.
On its affine complement, let $p_i$ be the dehomogenization of $f_i$, for $0\leq i\leq n$.
Then $\deg p_i=q$ for $1\leq i\leq n$, and $p_1=\cdots=p_n=0$ defines the whole ambient fiber.
Their highest homogeneous parts have no common nonzero zero: such a zero would give a point at infinity with $f_1=\cdots=f_n=0$ and $f_0\neq0$, contrary to the choice of the hyperplane.

Choose general affine linear coordinates $(s_1,\ldots,s_r,z_1,\ldots,z_c)$ so that the $ds_i$ restrict to a cotangent basis of $V$ at every point of $S_1$.
On this affine chart, define the polynomial
\[
 \Theta=\det\left(\frac{\partial p_{r+a}}{\partial z_b}\right)_{1\leq a,b\leq c}.
\]
It has degree at most $c(q-1)$.
\begin{claim}\label{claim:fiber-socle}
For every $x\in S_1$,
\[
 \Soc\bigl(\mathcal O_{\mathbf P^n,x}/(p_1,\ldots,p_n)\bigr)
 =\mathbb C\cdot\overline\Theta\neq\{0\}.
\]
\end{claim}

\begin{proof}[Proof of Claim~\ref{claim:fiber-socle}]
Fix $x\in S_1$ and center the source coordinates at $x$.
In the completed local ring, $V$ is a graph $z_b=\varphi_b(s)$.
Set $w_b=z_b-\varphi_b(s)$, so that differentiation in $w_b$ agrees with differentiation in $z_b$ when $s$ is fixed.
The function $p_0$ is a unit at $x$.
Put
\[
 v_i=p_i/p_0=u_i\circ f,\qquad
 H_a=h_a(v_1,\ldots,v_n),\qquad
 A=\widehat{\mathcal O}_{\mathbf P^n,x}/(H_1,\ldots,H_c).
\]
Total invariance gives $\sqrt J=I$ for the local ideals $I$ of $V$ and $J$ of $f^{-1}(V)$.
Thus $I^e\subseteq J\subseteq I$ for some $e\geq1$, and after completion
\[
 (w_1,\ldots,w_c)^e\subseteq(H_1,\ldots,H_c)
 \subseteq(w_1,\ldots,w_c).
\]
Hence $A_{\mathrm{red}}=B=\mathbb C[[s_1,\ldots,s_r]]$, and $A$ is finite over $B$, generated by the monomials in $w$ of total degree less than $e$.
Since $(H_1,\ldots,H_c)$ has height $c$, it is generated by a regular sequence, so $A$ is Cohen--Macaulay \cite[Tag~02JN]{Stacks}.
Miracle flatness then makes $A$ finite free over $B$ \cite[Tags~00R4 and~00NX]{Stacks}.

Since $g$ is \'etale at $x$, the reductions of $v_1,\ldots,v_r$ are parameters of $A_{\mathrm{red}}$.
Apply Lemma~\ref{lem:jacobian} with normal variables $w_b$ and parameter lifts $\tau_i=v_i$.
It follows that the relative Jacobian
\[
 \Delta=\det\left(\frac{\partial H_a}{\partial w_b}\right)
       =\det\left(\frac{\partial H_a}{\partial z_b}\right)
\]
has nonzero socle class in $A/(v_1,\ldots,v_r)$.
Since $u_1,\ldots,u_r,h_1,\ldots,h_c$ are local parameters at $y$, this quotient identifies with the local fiber algebra, namely the completed source ring modulo $(p_1,\ldots,p_n)$.
In this quotient the chain rule gives
\[
 \frac{\partial H_a}{\partial z_b}
 \equiv\frac{\partial v_{r+a}}{\partial z_b}
 \equiv p_0^{-1}\frac{\partial p_{r+a}}{\partial z_b}.
\]
Here $H_a-v_{r+a}\in(v_1,\ldots,v_n)^2$, and terms divisible by the $p_i$ vanish in the fiber algebra.
Taking determinants yields
\[
 \Delta\equiv p_0^{-c}\Theta\pmod{(p_1,\ldots,p_n)}.
\]
Since $p_0$ is a unit and the Artinian local fiber algebra equals its completion, $\Theta$ has nonzero socle class.
\end{proof}

Using the fixed affine coordinates $(s,z)$, set $\lambda_x=\rho_x(\Theta)$.
By Claim~\ref{claim:fiber-socle},
\[
 \rho_x(\Theta P)=\lambda_xP(x)\qquad\text{for every local function }P.
\]
Moreover, the nonzero socle class in Claim~\ref{claim:fiber-socle} and the perfect residue pairing imply $\lambda_x\neq0$.

Since $c=n-r$ and $t=r(q-1)-1$, a polynomial $P$ of degree at most $t$ satisfies
\[
 \deg(\Theta P)\leq c(q-1)+t=n(q-1)-1.
\]
The support of the ambient fiber is exactly $S_1$, so \eqref{eq:euler-jacobi} gives
\[
 0=\sum_{x\in S_1}\rho_x(\Theta P)
  =\sum_{x\in S_1}\lambda_xP(x).
\]
Let $L_0$ be a linear form defining the chosen source hyperplane, and let $v_x$ be the fixed homogeneous coordinate representative of $x$.
Every homogeneous polynomial $P$ of degree $t$ dehomogenizes to a polynomial of degree at most $t$, with value $P(v_x)/L_0(v_x)^t$ at $x$.
Replacing each weight $\lambda_x$ by $\lambda_x/L_0(v_x)^t$ gives the required homogeneous relation.
\end{proof}

\section{Lifting relations by duality}
\label{sec:duality}

Let $\omega_V=\mathcal H^{-r}(\omega_V^\bullet)$ be the canonical sheaf of $V$, where $\omega_V^\bullet$ is its normalized dualizing complex.
It is coherent and torsion-free of rank one and agrees with the canonical line bundle on $V_{\mathrm{reg}}$ \cite[Tags~0FVV and~0AXY]{Stacks}.

The adjunction twist $rq-t=r+1$ is independent of $q$.
The following lemma embeds $\omega_V(r+1)$ into a fixed twist of $\mathcal O_V$.

\begin{samepage}
\begin{lemma}\label{lem:embedding}
Let $V\subseteq\mathbf P^n$ be integral of dimension $r$.
There is an integer $m>0$, depending only on this embedding, and an injection
\[
 \omega_V(r+1)\hookrightarrow\mathcal O_V(m).
\]
\end{lemma}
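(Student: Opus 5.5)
Since $\omega_V$ is torsion-free of rank one, it suffices to produce a single nonzero map $\omega_V\to\mathcal O_V(k)$ for some $k$; such a map is automatically injective, and twisting by $r+1$ gives the lemma with $m=k+r+1$. There are two natural routes to such a map. We will first write down the general torsion-free argument, and then make it concrete through a finite linear projection.

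\emph{Abstract route.} Consider the coherent sheaf $\mathcal{H}om(\omega_V,\mathcal O_V)$. On $V_{\mathrm{reg}}$ it equals $\omega_V^{-1}$, so it is nonzero of rank one. By Serre's theorem, $\mathcal{H}om(\omega_V,\mathcal O_V)(k)$ is globally generated for $k\gg0$. In particular it has a global section $\phi\colon\omega_V\to\mathcal O_V(k)$ that is nonzero at the generic point of $V$. The kernel of $\phi$ is then a subsheaf of the torsion-free sheaf $\omega_V$ and vanishes at the generic point. It is therefore torsion, hence zero, so $\phi$ is injective.

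\emph{Concrete route.} This makes clear that $m$ depends only on the embedding. Choose a general linear projection $\pi\colon V\to\mathbf P^r$, which is finite of degree $d$ with $\pi^*\mathcal O(1)=\mathcal O_V(1)$. Duality for finite morphisms gives
\[
\omega_V\simeq\mathcal{H}om_{\mathcal O_{\mathbf P^r}}(\pi_*\mathcal O_V,\omega_{\mathbf P^r}) \quad\text{and}\quad \omega_{\mathbf P^r}=\mathcal O(-r-1),
\]
so $\omega_V(r+1)\simeq\mathcal{H}om(\pi_*\mathcal O_V,\mathcal O_{\mathbf P^r})$, with the twist $r+1$ absorbed exactly. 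Now pick a section of $\mathcal O_V(e)$ that generates the function field of $V$ over that of $\mathbf P^r$. For instance, take a general linear form $\ell$ and use $\ell/\ell_0$; its powers give a generically injective map $\bigoplus_{j<d}\mathcal O_{\mathbf P^r}(-j)\to\pi_*\mathcal O_V$. Dualizing and using the trace form, which is nondegenerate in characteristic zero, yields an injection of $\mathcal{H}om(\pi_*\mathcal O_V,\mathcal O)$ into $\pi_*\mathcal O_V(m)$ for an explicit $m$ of order $d$ (a discriminant-type twist). Either route suffices for the existence claim.

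\emph{Main obstacle.} The only delicate point is torsion-freeness of $\omega_V$, which is what forces a generically nonzero map to be injective. We take it from the cited Stacks tags: $\omega_V$ is $S_2$-reflexive and agrees with the line bundle on $V_{\mathrm{reg}}$. With this in hand, the abstract route is already complete.
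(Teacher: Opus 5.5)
Your abstract route is exactly the paper's proof: global generation of $\mathcal{H}om_V(\omega_V,\mathcal O_V)(k)$ for $k\gg0$ gives a section that is nonzero at the generic point, and torsion-freeness of $\omega_V$ makes it injective (the paper works with $\omega_V(r+1)$ directly instead of twisting afterwards). The concrete projection route is not needed and its last step is only sketched---to descend to $V$, the injection into $\pi_*\mathcal O_V(m)$ must be $\pi_*\mathcal O_V$-linear, e.g.\ multiplication by the derivative of the minimal polynomial of your primitive element on the trace dual---but nothing in your argument rests on it.
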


\begin{proof}
For $m\gg0$, the sheaf $\mathcal{H}om_V(\omega_V(r+1),\mathcal O_V)(m)$ is globally generated.
It has rank one, so a section nonzero at the generic point gives a generically injective morphism $\omega_V(r+1)\to\mathcal O_V(m)$.
The kernel is torsion and hence zero, since $\omega_V$, and hence $\omega_V(r+1)$, is torsion-free.
\end{proof}
\end{samepage}

This is the final lemma required for the proof of \hyperref[thm:main]{Linearity Theorem}.
It turns the relation on $S_1$ into a homogeneous polynomial of fixed degree vanishing on the other fibers in Setup~\ref{setup:standing}.
Under iteration, $S$ and $Q$ may vary, but $Q$ can always be chosen of the same degree $m$.

\begin{samepage}
\begin{lemma}\label{prop:lifting}
There is an integer $m>0$, depending only on the embedding $V\subseteq\mathbf P^n$, such that for every choice of data in Setup~\ref{setup:standing} with $q>md$, there exists a homogeneous polynomial $Q$ of degree $m$ satisfying
\[
 Q|_V\not\equiv0,\qquad Q|_{S\setminus S_1}=0.
\]
\end{lemma}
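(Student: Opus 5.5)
We take $m$ to be the integer provided by Lemma~\ref{lem:embedding}, so that we have a fixed injection $\omega_V(r+1)\hookrightarrow\mathcal O_V(m)$. The plan is to realize the relation of Lemma~\ref{prop:relation} as the residue data of a global section of $\omega_V(r+1)$, and then push that section forward into $H^0(\mathcal O_V(m))$ through the injection. The image section is nonzero on $V$. Because of how it is built, it should vanish at every point of $S\setminus S_1$. Since $m$ depends only on the embedding, we can lift the section to a form $Q$ of degree $m$, provided $\mathcal O_V(m)$ is also taken large enough that $H^0(\mathcal O_{\mathbf P^n}(m))\to H^0(\mathcal O_V(m))$ is surjective. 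If that surjectivity is not automatic we enlarge $m$, and any enlargement still depends only on the embedding.

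\textbf{Duality step.} On $V$, the forms $\widetilde\ell_1,\ldots,\widetilde\ell_r$ are sections of $\mathcal O_V(q)$ that cut out the reduced zero-dimensional scheme $S$. Near $S$ they form a regular sequence. Since $V$ is Cohen--Macaulay along $S$ (these are smooth points), we get a Koszul-type exact description there, and Grothendieck duality then identifies
\[
 \operatorname{Ext}^r_V(\mathcal O_S,\omega_V)\simeq \omega_V\otimes\det N_{S/V}\simeq\mathcal O_S.
\]
The connecting map $H^0(S,\mathcal O_S\otimes\omega_V(rq))\to H^1$ of the Koszul complex then tells us which functionals on $S$ lift. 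Equivalently, in dual form, a collection of weights $(\mu_x)_{x\in S}$ extends to a global section of $\omega_V(rq-t)=\omega_V(r+1)$ with prescribed residues along $S$ exactly when $\sum_x\mu_xP(x)=0$ for every $P\in H^0(\mathcal O_V(t))$. This is the Cayley--Bacharach-type statement. Concretely, we consider the residue map
\[
 H^0(\omega_V(r+1))\to \bigoplus_{x\in S}\mathbb C
\]
obtained by evaluating at $S$ after untwisting by the $\widetilde\ell_i$. Its cokernel is dual to the image of $H^0(\mathcal O_V(t))\to \mathbb C^S$, by Serre duality on $V$ applied to the Koszul resolution of $\mathcal O_S$. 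This vanishing of the cokernel, up to the appropriate $H^i$ error terms, is the step I expect to be the main obstacle. We need the intermediate cohomology $H^i(\mathcal O_V(t-jq))$ or $H^i(\omega_V(\cdot))$ appearing in the Koszul spectral sequence to vanish or not to interfere. Because $V$ may be singular and not Cohen--Macaulay, I would work only with $\omega_V=\mathcal H^{-r}(\omega_V^\bullet)$ and use the fact that $H^0(\omega_V(k))$ is dual to $H^r(\mathcal O_V(-k))$. I would also use the factorization of the relevant maps through twists that are very negative or very positive in $q$. That is where the hypothesis $q>md$ (so $q$ is large) is used, via Serre vanishing uniform in the large twists.

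\textbf{Construction of $Q$.} Take the weights $\mu_x=\lambda_x$ for $x\in S_1$ and $\mu_x=0$ for $x\in S\setminus S_1$. Lemma~\ref{prop:relation} shows that these satisfy the Cayley--Bacharach condition for all forms of degree $t$; forms on $V$ of degree $t$ restrict from $\mathbf P^n$ on the finite set $S$. The duality step therefore gives a section $\sigma\in H^0(\omega_V(r+1))$ whose residue at $x$ is $\mu_x$. In particular $\sigma$ is nonzero, because $\lambda_x\neq0$, and it vanishes at the smooth points of $S\setminus S_1$, where $\omega_V$ is a line bundle. Its image under the injection of Lemma~\ref{lem:embedding} is a nonzero section of $\mathcal O_V(m)$. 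This image vanishes at $S\setminus S_1$ because the injection is a morphism of sheaves. Lifting it to a form $Q$ of degree $m$ gives $Q|_V\not\equiv0$ and $Q|_{S\setminus S_1}=0$, as required.
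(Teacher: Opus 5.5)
Your plan is the paper's proof. The paper uses the same Koszul resolution of $\mathcal O_S(t)$ by the $\mathcal O_V(t-jq)$, Serre vanishing for the twists $t-jq\ge q-r-1$ ($j<r$), and top-degree duality $H^r(V,\mathcal O_V(-r-1))^\vee\simeq H^0(V,\omega_V(r+1))$, which needs no Cohen--Macaulay hypothesis, as you anticipated; identifying the transpose of the connecting map with restriction to $S$ is exactly how it settles the step you flag as the main obstacle, and the rest (weights extended by zero, pushing through $\omega_V(r+1)\hookrightarrow\mathcal O_V(m)$, lifting to $Q$) matches. The one point to make explicit is that $m$ must also be enlarged so that $md\ge N+r+1$, where $N$ is a threshold for the vanishing of $H^{>0}(\mathcal O_V(s))$ and of $H^1(\mathbf P^n,\mathcal I_V(s))$ (the latter is what lets the relation on degree-$t$ forms annihilate all of $H^0(\mathcal O_V(t))$); otherwise $q>md$ alone does not make $q$ large enough.
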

\end{samepage}

\begin{proof}
By Lemma~\ref{lem:embedding} and Serre vanishing \cite[Theorem~III.5.2]{Har77}, fix $m>0$ with an injection $\omega_V(r+1)\hookrightarrow\mathcal O_V(m)$ such that every section of $\mathcal O_V(m)$ extends to a homogeneous polynomial of degree $m$.
Also choose $N\geq1$ such that
\[
 H^1(\mathbf P^n,\mathcal I_V(s))=0,
 \qquad H^i(V,\mathcal O_V(s))=0\quad(i>0)
\]
for all $s\geq N$ (in fact, both vanishings hold for every $s>0$ by \cite[Proposition~2.3]{CL26} and the standard cohomology sequence).
Enlarging $m$ if necessary, retain the injection and extension property above and arrange that $md\geq N+r+1$.
Then $q>md$ ensures the vanishing conditions used below.
These choices depend only on the embedding of $V$.

By Lemma~\ref{prop:relation}, choose nonzero $\lambda_x$, $x\in S_1$, annihilating every homogeneous polynomial of degree $t$.
Put $\sigma_i=\widetilde\ell_i|_V$ and extend the $\lambda_x$ by zero on $S\setminus S_1$.
Using the chosen coordinate representatives to evaluate degree-$t$ sections on $S$, the weights define a nonzero functional $\lambda$ on $H^0(S,\mathcal O_S(t))$.
Since $t\geq N$, every section of $\mathcal O_V(t)$ extends to a degree-$t$ homogeneous polynomial.
Thus $\lambda$ annihilates the restrictions of all sections of $\mathcal O_V(t)$.

Write $i:S\hookrightarrow V$ and set $E=i_*\mathcal O_S(t)$.
For $0\leq j\leq r$, put
\[
 K_j=\mathcal O_V(t-jq)^{\oplus\binom rj}.
\]
The ordered sections $\sigma_1,\ldots,\sigma_r$ give the twisted Koszul resolution
\begin{equation}\label{eq:koszul-resolution}
 0\longrightarrow K_r\longrightarrow\cdots
 \longrightarrow K_0\longrightarrow E\longrightarrow0.
\end{equation}
Indeed, at a point of $S$ they generate the maximal ideal of a regular local ring of dimension $r$; away from $S$ one of them is a unit in a local trivialization.
The leftmost term is
\[
 M=K_r=\mathcal O_V(t-rq)=\mathcal O_V(-r-1).
\]
All other terms have sufficiently large twists, since
\[
 t-jq=(r-j)q-r-1\geq q-r-1\geq N\qquad(0\leq j<r).
\]
Their higher cohomology vanishes by the choice of $N$.
The successive connecting maps therefore give an exact sequence
\begin{equation}\label{eq:koszul-cohomology}
 H^0(V,\mathcal O_V(t))
 \longrightarrow H^0(S,\mathcal O_S(t))
 \xrightarrow{\partial}H^r(V,M)
 \longrightarrow0.
\end{equation}

We show that, under duality, the transpose of $\partial$ is restriction of sections of $\omega_V(r+1)$ to $S$.
Let
\[
 \varepsilon\in\operatorname{Ext}^r_V(E,M)
\]
be the extension class of \eqref{eq:koszul-resolution}; its connecting homomorphism is $\partial$.
Proper duality gives
\begin{equation}\label{eq:proper-duality}
\begin{aligned}
 H^0(V,\omega_V(r+1))
   \simeq\operatorname{Hom}_V(M,\omega_V)
   &\simeq H^r(V,M)^\vee,\\
 \operatorname{Ext}^r_V(E,\omega_V)
   &\simeq H^0(S,\mathcal O_S(t))^\vee.
\end{aligned}
\end{equation}
Here $\vee$ denotes the vector-space dual.
We regard a section $a$ of $\omega_V(r+1)$ as a morphism $a:M\to\omega_V$ via the first identification.
The first line uses top-degree duality.
For the second line, the canonical morphism $\omega_V[r]\to\omega_V^\bullet$ is an isomorphism near $S$.
As $E$ is supported on $S$, applying $R\mathcal{H}om_V(E,-)$ gives an isomorphism, and proper duality applies.
Both duality identifications are compatible with morphisms and connecting maps \cite[Tag~0FVV(5),(6)]{Stacks}.
Hence $\partial^\vee$ is, up to the common sign convention, the map
\[
 \operatorname{Hom}_V(M,\omega_V)
 \longrightarrow\operatorname{Ext}^r_V(E,\omega_V),
 \qquad a\longmapsto a_*\varepsilon,
\]
obtained by pushing the extension along $a:M\to\omega_V$.

To compute this map, apply $\mathcal{H}om_V(-,\omega_V)$ to \eqref{eq:koszul-resolution}.
Its final differential fits into the exact sequence
\[
 \omega_V(r+1-q)^{\oplus r}
 \xrightarrow{(\pm\sigma_1,\ldots,\pm\sigma_r)}
 \omega_V(r+1)
 \longrightarrow i_*(\omega_V(r+1)|_S)
 \longrightarrow0.
\]
There are no other cohomology sheaves: near $S$ this is the dual Koszul complex of a regular sequence on an invertible sheaf, and away from $S$ the complex is contractible.
It follows that
\begin{equation}\label{eq:koszul-duality}
 \operatorname{Ext}^r_V(E,\omega_V)
 \simeq H^0(S,\omega_V(r+1)|_S).
\end{equation}
The class $a_*\varepsilon$ is represented by $a:K_r=M\to\omega_V$ in the last term of the dual complex, so its image is precisely $a|_S$.
Dualizing \eqref{eq:koszul-cohomology} therefore lifts $\lambda$ to a section $a\in H^0(V,\omega_V(r+1))$.

Put $L_x=\mathcal O_S(t)|_x$ and $W_x=\omega_V(r+1)|_x$ for $x\in S$.
The isomorphism \eqref{eq:koszul-duality} and the second line of \eqref{eq:proper-duality}, with the common sign convention fixed, induce
\[
 \beta=\bigoplus_{x\in S}\beta_x:
 \bigoplus_{x\in S}W_x
 \xrightarrow{\sim}
 \left(\bigoplus_{x\in S}L_x\right)^\vee,
 \qquad \beta_x:W_x\xrightarrow{\sim}L_x^\vee.
\]
These maps respect the decomposition of $E$ over the points of $S$.
Using the coordinate representative $v_x$ to define the evaluation functional $\operatorname{ev}_{v_x}\in L_x^\vee$, the lifted section satisfies
\[
 \beta_x(a|_x)=\lambda_x\operatorname{ev}_{v_x},
 \qquad
 a|_x=0\ \Longleftrightarrow\ \lambda_x=0.
\]
Since $\lambda\ne0$ and is supported on $S_1$, we obtain
\[
 a\ne0,\qquad a|_{S\setminus S_1}=0.
\]
Let $\iota:\omega_V(r+1)\hookrightarrow\mathcal O_V(m)$ be the fixed injection.
Lift $\iota(a)$ to a homogeneous polynomial $Q$ of degree $m$.
Then
\[
 Q|_V=\iota(a)\ne0,\qquad Q|_{S\setminus S_1}=0,
\]
as required.
\end{proof}

\end{document}